\newtheorem{theorem}{Theorem}
\newtheorem{lemma}{Lemma}
\DeclareMathOperator{\rank}{rank}
\begin{document}

\title{Classification of affine operators up to biregular conjugacy}

\author{Tetiana Budnitska\\Institute of Mathematics, Tereshchenkivska 3, Kyiv, Ukraine\\ budnitska\_t@ukr.net
    \and
Nadiya Budnitska\\                     Kyiv Taras Shevchenko University\\
Volodymytska 64, Kyiv, Ukraine\\
nadya\_vb@ukr.net}

\date{}

\maketitle
\begin{abstract}
Let $f(x)=Ax+b$ and $g(x)=Cx+d$ be two affine operators given by $n$-by-$n$ matrices $A$ and $C$ and vectors $b$ and $d$ over a field $\mathbb F$. They are said to be  biregularly conjugate if $f=h^{-1}gh$ for some bijection $h: \mathbb F^n\to \mathbb F^n$ being biregular, this means that the coordinate functions of $h$ and $h^{-1}$ are polynomials. Over an algebraically closed field of characteristic $0$,
we obtain necessary and sufficient
conditions of biregular conjugacy of affine operators and give a canonical form of an affine operator up to biregular conjugacy. These results for bijective affine operators were obtained by J.~Blanc [Conjugacy classes of affine automorphisms of $\mathbb K^n$ and linear automorphisms of $\mathbb P^n$ in the Cremona groups, Manuscripta Math. 119 (2006) 225--241].

\textit{AMS classification:} 37C15; 15A21

\textit{Keywords:} Affine operators; Biregular conjugacy; Canonical forms

\end{abstract}

\section{Introduction and theorem}

In this article, affine operators are classified up to biregular conjugacy. All matrices and vector spaces that we consider are over an algebraically closed field $\mathbb F$ of zero characteristic.

A map $f: \mathbb F^n \rightarrow \mathbb F^n$ of the form $f(x)=Ax+b$ with $A \in \mathbb F^{\,n\times n}$ and $b \in \mathbb F^n$ is called an \emph{affine operator} and $A$ is called its \emph{matrix}. 
 Two maps $f$, $g: \mathbb F^n \rightarrow \mathbb F^n$ are \emph{biregularly  conjugate} if $g=h^{-1} f h$ for some bijection $h: \mathbb F^n \rightarrow \mathbb F^n$ being \emph{biregular}, this means that $h$ and $h^{-1}$ have the form
\begin{equation*}
(x_1,\ldots,x_n) \mapsto (h_1(x_1, \ldots, x_n),\ldots,h_n(x_1, \ldots, x_n)),
\end{equation*}
in which all $h_i$ are polynomials over $\mathbb F$.
The group of biregular operators on $\mathbb F^n$ is called the \textit{affine Cremona group}.

Two matrices $A,B\in\mathbb{F}^{n\times n}$ are \emph{similar} if $A=S^{-1}BS$ for some nonsingular $S\in\mathbb F^{n\times n}$.
An element $x$ is called a \emph{fixed point} of $f$ if $f(x)=x$. An affine operator without fixed point can not be biregularly conjugate to an affine operator with fixed point since if two maps on $\mathbb F^n$ are biregularly conjugate then they have the same number of fixed points.

J\'er\'emy Blanc classified bijective affine operators up to biregular conjugacy as follows.

\begin{theorem}[Blanc \cite{Blanc}]\label{Blanc}
Let $\mathbb F$ be an algebraically closed field of characteristic~$0$.

\begin{itemize}
  \item[\rm(a)] Two bijective affine operators $\mathbb F^n\to \mathbb F^n$ with fixed points are biregularly conjugate if and only if their matrices are similar.

  \item[\rm(b)] Each bijective affine operator $f:\mathbb F^n\to \mathbb F^n$ without fixed point is biregularly conjugate to an ``almost-diagonal'' affine operator
      \begin{equation}\label{diag-autom}
(x_1, x_2, \ldots, x_n)\mapsto (x_1+1, \alpha_2 x_2, \dots, \alpha_n x_n),
\end{equation}
in which $1,\alpha_2, \dots, \alpha_n\in \mathbb F\setminus 0$ are all eigenvalues of the matrix of $f$ repeated according to their multiplicities. The affine operator \eqref{diag-autom} is uniquely determined by $f$, up to permutation of $\alpha_2,\dots,\alpha_n$.

\end{itemize}
\end{theorem}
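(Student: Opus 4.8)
The plan is to settle part~(a) by a Jacobian computation and part~(b) by an explicit normal-form reduction (for existence) together with a bootstrap off part~(a) (for uniqueness). Two elementary facts will be used repeatedly: if $h$ is biregular then, differentiating $h\circ h^{-1}=\mathrm{id}$, its Jacobian matrix $Dh$ is invertible at every point of $\mathbb F^n$ --- indeed $\det Dh$ is a polynomial vanishing nowhere, hence a nonzero constant by the Nullstellensatz --- and the Jacobian of an affine operator $f(x)=Ax+b$ is the constant matrix $A$. For the ``if'' part of~(a), if $A=S^{-1}CS$, first conjugate $f$ and $g$ by translations (which are biregular) to move their fixed points to the origin, reducing to $f(x)=Ax$, $g(x)=Cx$; then $x\mapsto Sx$ is a biregular conjugacy. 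For ``only if'', given $g=h^{-1}fh$ with $h$ biregular, choose a fixed point $q$ of $g$ and put $p:=h(q)$, a fixed point of $f$; differentiating $fh=hg$ at $q$ yields $A\,Dh(q)=Dh(q)\,C$, and since $Dh(q)$ is invertible, $A$ and $C$ are similar.

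For the existence half of~(b), note first that $f(x)=Ax+b$ has a fixed point iff $b\in\Image(A-I)$, so having no fixed point means $b\notin\Image(A-I)$, which forces $1$ to be an eigenvalue of $A$. I would then normalize: a linear change of coordinates puts $A$ in Jordan form; a translation alters $b$ only by a vector in $\Image(A-I)$; and a linear shear reduces $b$ to a single standard basis vector $e\notin\Image(A-I)$, the last vector of some Jordan block of eigenvalue $1$. After this, in suitable coordinates $f$ acts by $x_1\mapsto x_1+1$ on one distinguished coordinate and by Jordan rows $x_k\mapsto\lambda x_k+x_{k+1}$ on the remaining coordinates (with $x_{k+1}$ omitted at the end of a block), and the eigenvalues $\lambda$ occurring, together with the distinguished $1$, are exactly the eigenvalues of $A$ with multiplicity (linear conjugacy and translation preserve the characteristic polynomial). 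The remaining, and principal, task is to delete every off-diagonal term $x_{k+1}$ by a biregular conjugacy. I would do this one entry at a time, conjugating by a map $h$ that changes only the coordinate $x_k$, replacing it by $x_k+\psi(x_1,x_{k+1})$, where $\psi$ is a polynomial solving a difference equation of the form $\psi(x_1+1,\mu x_{k+1})=\lambda\,\psi(x_1,x_{k+1})+x_{k+1}$ ($\mu$ being the eigenvalue attached to $x_{k+1}$); such a $\psi$ always exists because the operator $\psi\mapsto\psi(x_1+1,\,\cdot\,)-\lambda\,\psi(x_1,\,\cdot\,)$ is surjective on polynomials in $x_1$. Each such $h$ and its inverse are triangular polynomial maps, hence biregular, and iterating clears all off-diagonal entries, ending at the almost-diagonal operator~\eqref{diag-autom} with the prescribed eigenvalues.

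For uniqueness in~(b), suppose $g_\alpha h=hg_\beta$, where $g_\alpha(x)=(x_1+1,\alpha_2x_2,\dots,\alpha_nx_n)$ and $g_\beta(x)=(x_1+1,\beta_2x_2,\dots,\beta_nx_n)$ and $h=(h_1,\dots,h_n)$ is biregular. The first-coordinate identity gives $h_1\circ g_\beta=h_1+1$, so $h_1-x_1$ is $g_\beta$-invariant; a short comparison of coefficients shows that a $g_\beta$-invariant polynomial involves only $x_2,\dots,x_n$ and is invariant under the diagonal map $D_\beta=\mathrm{diag}(\beta_2,\dots,\beta_n)$, hence $h_1=x_1+p(x_2,\dots,x_n)$. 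For $j\geq2$, the identity $h_j\circ g_\beta=\alpha_j h_j$, expanded in powers of $x_1$, forces --- by a characteristic-$0$ argument comparing the top two $x_1$-coefficients --- that $h_j$ is free of $x_1$ and is a $D_\beta$-semi-invariant of weight $\alpha_j$. Therefore $h$ has the triangular shape $h(x_1,x')=(x_1+p(x'),\bar h(x'))$, so $\bar h=(h_2,\dots,h_n)$ is itself a biregular automorphism of $\mathbb F^{n-1}$ and satisfies $\bar h\circ D_\beta=D_\alpha\circ\bar h$. Thus the linear operators $D_\alpha$ and $D_\beta$, which fix the origin, are biregularly conjugate; by part~(a) their matrices are similar, and being diagonal they carry the same multiset of diagonal entries, so $\{\alpha_2,\dots,\alpha_n\}=\{\beta_2,\dots,\beta_n\}$.

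I expect the real obstacle to be the elimination of the Jordan off-diagonal entries in the existence part of~(b): one must iterate in a workable order, check that each governing difference equation has a polynomial solution, and verify that the triangular maps so produced --- and, crucially, their inverses --- are genuine polynomial maps. The bookkeeping in the normalization of $(A,b)$ (especially the consolidation of $b$ when several eigenvalue-$1$ blocks contribute) is also somewhat delicate. By contrast, part~(a) and the uniqueness half of~(b) are comparatively routine once the above structural observations are available.
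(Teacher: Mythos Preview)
The paper does not contain its own proof of this theorem: Theorem~\ref{Blanc} is quoted from Blanc~\cite{Blanc} and then used as a black box in the proof of Theorem~\ref{all-aff-bir}. The only overlap is that the paper re-derives part~(a) --- in the broader setting of not-necessarily-bijective operators --- when proving statement~(i) of Theorem~\ref{all-aff-bir}(a), and there it uses exactly your Jacobian argument (reduce to the linear parts via Lemma~\ref{pro-neryx-tochk}, then differentiate $\varphi f_{\text{\rm lin}}=g_{\text{\rm lin}}\varphi$ at~$0$). Part~(b) is never argued in the paper: the reduction from \eqref{twk} to \eqref{f-can} simply invokes Theorem~\ref{Blanc}(b), and the uniqueness of $D_{\alpha}$ up to permutation of eigenvalues is likewise obtained by citing Theorem~\ref{Blanc} once $f_*\sim g_*$ has been established.

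Your proposal therefore goes well beyond what the paper attempts: you are supplying a self-contained proof of Blanc's result rather than comparing against one. Your argument for~(a) matches the paper's. For~(b), your uniqueness argument --- peeling off $x_1$ to get a biregular $\bar h$ conjugating $D_{\alpha}$ to $D_{\beta}$, then applying~(a) --- is correct. In the existence half the outline is right, but one point needs more care than you indicate: when you conjugate by $x_k\mapsto x_k+\psi(x_1,x_{k+1})$ to clear row~$k$, the row above acquires an extra $\psi(x_1,x_{k+1})$ term (since $f$'s $(k-1)$-st coordinate reads the modified $x_k$), so later eliminations must absorb polynomials in several of the lower variables, not just a single $x_{k+1}$. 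The difference operator in $x_1$ is still surjective on polynomials, so this is manageable, but it is exactly the bookkeeping you flag as the ``real obstacle''.
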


Each square matrix
\begin{equation}\label{+-0a}
  A\quad\text{is similar to}\quad A_* \oplus A_{\circ},
\end{equation}
in which $A_*$ is nonsingular and $A_{\circ}$ is nilpotent.

In this article, we proof the following theorem, in which
Theorem~\ref{Blanc} is extended to arbitrary affine operators.

\begin{theorem}\label{all-aff-bir}
Let $\mathbb F$ be an algebraically closed field of characteristic~$0$.
\begin{itemize}
\item[{\rm (a)}] The criterion of biregular conjugacy of two affine operators $f(x)=Ax+b$ and $g(x)= Cx+d$ over $\mathbb F$ is the following:
\begin{itemize}
\item[\rm(i)] If $f$ and $g$ have fixed points, then $f$ and $g$ are biregularly conjugate if and only if their matrices $A$ and $C$ are similar.

\item[\rm(ii)] If $f$ and $g$ have no fixed points, then $f$ and $g$ are biregularly conjugate if and only if the nonsingular matrices $A_*$ and $C_*$ $($defined in \eqref{+-0a}$)$ have the same eigenvalues with the same multiplicities, and the nilpotent matrices $A_{\circ}$ and $C_{\circ}$ are similar.
\end{itemize}

\item[{\rm (b)}] A canonical form of an affine operator $f(x)=Ax+b$ under biregular conjugacy is the following:
\begin{itemize}
\item[\rm(i)] If $f$ has a fixed point, then $f$ is biregularly conjugate to the linear operator
$x \mapsto Jx,$
in which $J$ is the Jordan canonical form of $A$ uniquely determined by $f$ up to permutation of blocks.

\item[\rm(ii)] If $f$ has no fixed point, then $f$ is biregularly conjugate to
\begin{equation}\label{canon-ne-bijec}
\begin{bmatrix}
  x_1\\ x_2\\ \vdots\\ x_n
\end{bmatrix}\mapsto \begin{bmatrix} x_1+1\\ \alpha_2 x_2\\ \vdots\\ \alpha_k x_k\\J_{\circ}\tilde{x}
\end{bmatrix},\qquad \tilde{x}:= \begin{bmatrix}
  x_{k+1}\\ x_{k+2}\\ \vdots\\ x_n
\end{bmatrix},
\end{equation}
in which $1,\alpha_2,\dots,\alpha _k\in \mathbb F\setminus 0$ are all the eigenvalues of $A_*$ repeated according to their multiplicities, and $J_{\circ}$ is the Jordan canonical form of $A_{\circ}$. The affine operator \eqref{canon-ne-bijec} is uniquely determined by $f$, up to permutation of eigenvalues $\alpha_2,\dots,\alpha _k$ and permutation of blocks in $J_{\circ}$.
\end{itemize}
\end{itemize}
\end{theorem}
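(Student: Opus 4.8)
The plan is to deduce everything from Blanc's Theorem~\ref{Blanc} by combining the Fitting decomposition \eqref{+-0a} with two ``differential'' bookkeeping devices. Suppose $f=h^{-1}gh$ with $h$ biregular; write $k:=h^{-1}$, so $k\circ f=g\circ k$ and hence $k\circ f^{m}=g^{m}\circ k$ for all $m\ge 0$. Since $f,g$ are affine we have $D(f^{m})\equiv A^{m}$ and $D(g^{m})\equiv C^{m}$, so differentiating yields the cocycle identity
\begin{equation*}
Dk\big(f^{m}(x)\big)\,A^{m}=C^{m}\,Dk(x)\qquad (m\ge 0,\ x\in\mathbb F^{n}).
\end{equation*}
Applying the chain rule to $h\circ h^{-1}=\mathrm{id}$ shows $\det Dk$ is a nonzero constant, so $Dk(x)$ is invertible for every $x$. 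Two consequences are immediate. First, taking ranks in the identity above gives $\rank A^{m}=\rank C^{m}$ for all $m$. Second, $k$ maps $\mathrm{Fix}(f)$ biregularly onto $\mathrm{Fix}(g)$; after conjugating $f$ and $g$ by translations that move a fixed point to the origin (so that $f$ becomes $x\mapsto Ax$ and $g$ becomes $x\mapsto Cx$), differentiating the conjugacy relation at the origin gives $Dk(0)\,A=C\,Dk(0)$, i.e.\ $A$ is similar to $C$. This is exactly part~(a)(i) (the converse direction being trivial), and it gives (b)(i): conjugate $f$ by the translation sending its fixed point to $0$, then by the similarity bringing $A$ to its Jordan form $J$; uniqueness of $J$ up to permutation of blocks follows from (a)(i).

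For the fixed-point-free case I first establish the normal form \eqref{canon-ne-bijec}. Using \eqref{+-0a}, conjugate $f$ by a linear map so that $A=A_{*}\oplus A_{\circ}$ with respect to a splitting $\mathbb F^{n}=V_{*}\oplus V_{\circ}$, and write $b=(b_{*},b_{\circ})$; then $f$ splits as $f_{*}(x_{*})=A_{*}x_{*}+b_{*}$ on $V_{*}$ times $f_{\circ}(x_{\circ})=A_{\circ}x_{\circ}+b_{\circ}$ on $V_{\circ}$. Because $A_{\circ}$ is nilpotent, $A_{\circ}-I$ is invertible, so $f_{\circ}$ has a unique fixed point; translating it to $0$ on $V_{\circ}$ and putting $A_{\circ}$ into Jordan form $J_{\circ}$ turns the $V_{\circ}$-part into $x_{\circ}\mapsto J_{\circ}x_{\circ}$. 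Since $f$ has no fixed point and $f_{\circ}$ does, $f_{*}$ has none either, so $A_{*}-I$ is singular, i.e.\ $1$ is an eigenvalue of the nonsingular matrix $A_{*}$; thus $f_{*}$ is a \emph{bijective} fixed-point-free affine operator on $V_{*}$, and Blanc's Theorem~\ref{Blanc}(b) conjugates it biregularly to $(x_{1},\dots,x_{k})\mapsto(x_{1}+1,\alpha_{2}x_{2},\dots,\alpha_{k}x_{k})$, where $1,\alpha_{2},\dots,\alpha_{k}$ are the eigenvalues of $A_{*}$ with multiplicities. Extending this conjugacy by the identity on $V_{\circ}$ produces precisely \eqref{canon-ne-bijec}. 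This proves existence in (b)(ii), and also the ``if'' direction of (a)(ii), since equal invariants yield the same normal form up to the allowed permutations.

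It remains to prove the converse in (a)(ii): biregular conjugacy of fixed-point-free $f,g$ forces $A_{\circ}\sim C_{\circ}$ and forces $A_{*}$ and $C_{*}$ to have the same eigenvalues with multiplicities. The nilpotent part is the easy half: from $\rank A^{m}=\rank C^{m}$ for all $m$ one reads off $\dim V_{*}$ as the eventual value of $m\mapsto\rank A^{m}$, and then the Jordan block sizes of $A_{\circ}$ from the second differences of $m\mapsto\rank A^{m}-\dim V_{*}$, so $A_{\circ}\sim C_{\circ}$. The eigenvalue part is the main obstacle, and here I would use the ``eventual image'' $Y(f):=\bigcap_{m\ge 0}f^{m}(\mathbb F^{n})$. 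Each $f^{m}(\mathbb F^{n})$ is an affine subspace of dimension $\rank A^{m}$, so this decreasing chain stabilizes; hence $Y(f)$ is an $f$-invariant affine subspace on which $f$ restricts to a bijective affine operator. Computing $Y(f)$ on the normal form \eqref{canon-ne-bijec}, where $J_{\circ}^{m}=0$ for large $m$, shows that $Y(f)$ is biregularly identified with $V_{*}$ and $f|_{Y(f)}$ with $f_{*}$. Since the conjugating biregular map is a bijection of $\mathbb F^{n}$ intertwining $f^{m}$ with $g^{m}$, it carries the chain $\{f^{m}(\mathbb F^{n})\}$ onto $\{g^{m}(\mathbb F^{n})\}$, hence $Y(f)$ biregularly onto $Y(g)$ and $f|_{Y(f)}$ onto $g|_{Y(g)}$; therefore $f_{*}$ and $g_{*}$ are biregularly conjugate, and Blanc's uniqueness in Theorem~\ref{Blanc}(b) gives that $A_{*}$ and $C_{*}$ share their eigenvalues with multiplicities. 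Finally, uniqueness in (b)(ii) follows: the data $\{1,\alpha_{2},\dots,\alpha_{k}\}$ and $J_{\circ}$ are biregular invariants of $f$ by what was just proved, so two normal forms \eqref{canon-ne-bijec} attached to biregularly conjugate operators differ only by the permitted permutations. I expect the points needing care to be verifying that the conjugating map genuinely restricts to an \emph{isomorphism of varieties} $Y(f)\to Y(g)$ (so that Blanc applies to $f_{*},g_{*}$), and checking that the fixed-point-free hypothesis descends to $f_{*}$.
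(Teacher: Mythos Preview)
Your argument is correct and follows the same architecture as the paper: translate a fixed point to the origin and differentiate for (a)(i); for (b)(ii) split via the Fitting decomposition, use that $A_\circ-I$ is invertible to kill the translation on the nilpotent summand, and apply Blanc's Theorem~\ref{Blanc}(b) to the bijective fixed-point-free summand $f_*$; for uniqueness restrict the conjugacy to the eventual image $Y(f)=\bigcap_m f^m(\mathbb F^n)$ (which the paper writes simply as $V_m$ for $m$ large) and invoke Blanc again, while recovering the nilpotent Jordan data from the sequence $\rank A^m$.

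The one substantive difference is how you obtain $\dim f^m(\mathbb F^n)=\dim g^m(\mathbb F^n)$. The paper notes $h(V_i)=W_i$ and then cites Hartshorne (dimension is a biregular invariant) to conclude $\dim V_i=\dim W_i$. You instead differentiate $k\circ f^m=g^m\circ k$ to get $Dk(f^m(x))\,A^m=C^m\,Dk(x)$ and use that $\det Dk$ is a nowhere-vanishing polynomial over an algebraically closed field, hence a nonzero constant, so $Dk(x)$ is invertible at every point and $\rank A^m=\rank C^m$. Your route is more self-contained and avoids importing dimension theory from algebraic geometry; the paper's route is shorter once that fact is granted. The point you flag as needing care, that $h$ restricts to a biregular map $Y(f)\to Y(g)$, is exactly what the paper uses: in the normal forms both eventual images equal $\mathbb F^p\times\{0\}$ (you have already shown $p=q$), and a biregular $h$ with $h(\mathbb F^p\times\{0\})=\mathbb F^p\times\{0\}$ restricts to a biregular self-map of $\mathbb F^p$ because both $h$ and $h^{-1}$ restrict to polynomial maps.
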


\section{Proof of Theorem~\ref{all-aff-bir}}

\begin{lemma}[\cite{Blanc,st-1}]\label{pro-neryx-tochk}
An affine operator $f(x)=Ax+b$ over $\mathbb F$  has a fixed point if and only if  $f$ is biregularly conjugate to its linear part $f_{\text{\rm lin}}(x):=Ax$.
\end{lemma}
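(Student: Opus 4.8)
\section*{Proof proposal for Lemma~\ref{pro-neryx-tochk}}

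The plan is to establish the two directions of the equivalence separately; the only non‑formal ingredient is the elementary observation that every translation of $\mathbb F^n$ is a biregular operator, because an affine map and its affine inverse have polynomial coordinate functions.

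I would first treat the ``if'' direction, which carries the content. Assume $f(x)=Ax+b$ has a fixed point $p$, so that $Ap+b=p$. Take $h$ to be the translation $h(x)=x+p$, whose inverse is $h^{-1}(x)=x-p$; as just noted, $h$ is biregular. One then computes directly
\[
h^{-1}fh(x)=h^{-1}\bigl(A(x+p)+b\bigr)=Ax+(Ap+b-p)=Ax=f_{\text{\rm lin}}(x),
\]
where the equality $Ax+(Ap+b-p)=Ax$ is the fixed-point relation. Hence $f_{\text{\rm lin}}=h^{-1}fh$, so $f$ and $f_{\text{\rm lin}}$ are biregularly conjugate.

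For the ``only if'' direction, suppose $f$ is biregularly conjugate to $f_{\text{\rm lin}}$. Since $f_{\text{\rm lin}}(0)=0$, the linear operator $f_{\text{\rm lin}}$ has a fixed point. As recalled in the introduction, biregularly conjugate maps on $\mathbb F^n$ have the same number of fixed points: if $f_{\text{\rm lin}}=h^{-1}fh$, then $x$ is fixed by $f_{\text{\rm lin}}$ exactly when $h(x)$ is fixed by $f$, so the bijection $h$ identifies the two fixed-point sets. Consequently $f$ has a fixed point as well.

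I do not expect a real obstacle here: the argument is short once one records that translations lie in the affine Cremona group and that the number of fixed points is a biregular invariant. The function of this lemma in the sequel is to reduce the fixed-point case of Theorem~\ref{all-aff-bir} to the classification of linear operators up to similarity, where one can invoke the Jordan canonical form.
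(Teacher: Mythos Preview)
Your proof is correct and follows essentially the same route as the paper's: conjugate by the translation $h(x)=x+p$ when $p$ is a fixed point, and for the converse use that $f_{\text{lin}}(0)=0$ together with the invariance of the fixed-point set under biregular conjugacy. One cosmetic remark: your labels ``if'' and ``only if'' are swapped relative to the stated equivalence (assuming a fixed point is the ``only if'' direction).
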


\begin{proof}
If $p$ is a fixed point of $f$, then $f$ and $f_{\text{\rm lin}}$ are biregularly conjugate via  $h(x):=x+p$ since for each $x$
\begin{align*}
(h^{-1}fh)(x)&= (h^{-1}f)(x+p)= h^{-1}(A(x+p)+b)\\&= h^{-1}(Ax+(p-b)+b)=h^{-1}(Ax+p)=Ax=f_{\text{\rm lin}}(x)
\end{align*}
and so $f_{\text{\rm lin}}=h^{-1}fh$.

Conversely, let $f$ and $ f_{\text{\rm lin}}$ be biregularly conjugate. Since $f_{\text{\rm lin}}(0)=0$, $f_{\text{\rm lin}}$ has a fixed point. Because biregularly conjugate maps have the same number of fixed points, $f$ has a fixed point too.
\end{proof}

It is convenient to give an affine operator $f(x)=Ax+b$ by the pair $(A,b)$ and identify $f$ with this pair.

For two affine operators $f$ on $\mathbb F^m$ and $g$ on $\mathbb F^n,$ their \emph{direct sum} is the affine operator $f\oplus g$ on $\mathbb F^{m+n}$ defined as follows:
\[
(f\oplus g)(\begin{bmatrix}x \\y\end{bmatrix}) :=\begin{bmatrix}
                     f(x) \\
                     g(y) \\
                   \end{bmatrix}.
\]
Thus,
\begin{equation*}\label{rwi}
(A,b)\oplus (C,d)=\left(\begin{bmatrix}
                     A & 0 \\
                     0 & C \\
                   \end{bmatrix},
\begin{bmatrix}
                     b \\
                     d \\
                   \end{bmatrix}
 \right).
\end{equation*}
We write
$f\sim g$ if $f$ and $g$ are biregularly  conjugate. Clearly,
\begin{equation}\label{xol}
f\sim f'\text{ and } g\sim g' \quad \Longrightarrow \quad f\oplus g \sim f'\oplus g'.
\end{equation}

The statement (i) of Theorem \ref{all-aff-bir}(a) can be proved as ~\cite[Proposition 2]{Blanc}:
by Lemma~\ref{pro-neryx-tochk}, affine operators $f(x)=Ax+b$ and $g(x)= Cx+d$ with fixed points are biregularly conjugate if and only if their linear parts $f_{\text{\rm lin}}(x)=Ax$ and $g_{\text{\rm lin}}(x)=Cx$ are biregularly conjugate if and only if there exists a biregular map $\varphi:\mathbb F^n \to \mathbb F^n$ such that $\varphi f_{\text{\rm lin}}=g_{\text{\rm lin}} \varphi$. Differentiating the last equality and evaluating at zero
gives $D\varphi(0)\,A=C\, D\varphi(0)$, thus $A$ and $C$ are similar. Conversely, if $A=S^{-1}CS$ then $\psi f_{\text{\rm lin}}=g_{\text{\rm lin}} \psi$ with $\psi(x):=Sx$.

\smallskip

The statement (i) of Theorem \ref{all-aff-bir}(b) follows from (i) of Theorem \ref{all-aff-bir}(a).

\smallskip

In the remaining part of the paper, we prove the statement (ii) of Theorem \ref{all-aff-bir}(b), which ensures (ii) of Theorem \ref{all-aff-bir}(a).

\subsection{Reduction to the canonical form}\label{can-form}

Let $f(x)=Ax+b$ be an affine operator over $\mathbb F$ without fixed point. Let $A_*$ and  $A_{\circ}$ be defined by \eqref{+-0a}.
The map $f$ can be reduced to~\eqref{canon-ne-bijec} by transformations of biregular conjugacy in 3 steps:
\begin{itemize}
  \item $f$ is reduced to  the form
\begin{equation}\label{twj}
(A_*,c) \oplus (J_{\circ},s),
\end{equation}
in which $J_{\circ}$ is the Jordan canonical form of~$A_{\circ}$.
We make this reduction by a transformation of \emph{linear conjugacy} $f\mapsto h^{-1}fh$, given by a linear map $h(x)=Sx$ with a nonsingular matrix $S$. Then
$(A,b)\mapsto (S^{-1}AS,S^{-1}b)$; we take $S$ such that $S^{-1}AS=A_* \oplus J_{\circ}$.

  \item \eqref{twj} is reduced  to  the form
\begin{equation}\label{twk}
(A_*,c) \oplus (J_{\circ},0).
\end{equation}
We use \eqref{xol} and $(J_{\circ},s)\sim (J_{\circ},0)$,
 the latter holds by Lemma~\ref{pro-neryx-tochk}. Note that $(A_*,c)$ has no fixed point since if $(x_1,\dots,x_k)$ is a fixed point of $(A_*,c)$ then $(x_1,\dots,x_k,0,\dots,0)$ is a fixed point of \eqref{twk}, but $f$ has no fixed point.

  \item \eqref{twk} is reduced to  the form
\begin{equation}\label{f-can}
(D_{\alpha},e_1) \oplus (J_{\circ},0),\\
\end{equation}
in which
$$
D_{\alpha}:=
\begin{bmatrix}
1 &          &        &   0    \\
  & \alpha_2 &        &        \\
  &          & \ddots &        \\
0 &          &        &\alpha_k
\end{bmatrix},\quad e_1:=\begin{bmatrix}
1\\0\\\vdots\\0
\end{bmatrix},
$$
and $1,\alpha_2,\dots, \alpha _k$ are all the eigenvalues of $A_*$ which are repeated according to their multiplicities  $($note that \eqref{f-can} is another form of \eqref{canon-ne-bijec}$)$.
We use \eqref{xol} and $(A_*,c) \sim (D_{\alpha},e_1)$, the latter follows from Theorem~\ref{Blanc}(b).

\end{itemize}

\subsection{Uniqueness of the canonical form}\label{uniq}

Let
\[
f=f_*\oplus f_{\circ}
,\quad f_*=(D_{\alpha},e_1):{\mathbb F}^p\to{\mathbb F}^p,\quad f_{\circ}=(J_{\circ},0):{\mathbb F}^{n-p}\to{\mathbb F}^{n-p},\]
and
\[
g=g_*\oplus g_{\circ}
,\quad g_*=(D_{\beta },e_1):{\mathbb F}^q\to{\mathbb F}^q,\quad g_{\circ}=(J'_{\circ},0):{\mathbb F}^{n-q}\to{\mathbb F}^{n-q},\]
be two affine operators of the form \eqref{f-can} on ${\mathbb F}^n$, in which $f_*$ and $g_*$ have no fixed points. Let $f$ and $g$ be biregularly conjugate.
For each $i=1,2,\dots$, the images of $f^i$ and $g^i$ are the sets
\begin{equation*}\label{teo}
V_i:= f^i{\mathbb F}^n={\mathbb F}^p\oplus J_{\circ}^i{\mathbb F}^{n-p},\qquad W_i:= g^i{\mathbb F}^n={\mathbb F}^q\oplus J_{\circ}^{\prime i}{\mathbb F}^{n-q},
\end{equation*}
and so they are vector subspaces of ${\mathbb F}^n$ of dimensions
\begin{equation}\label{suk}
\dim V_i=p+\rank J_{\circ}^i,\qquad \dim W_i=q+\rank J_{\circ}'^i.
\end{equation}

Since $f$ and $g$ are biregularly conjugate, there exists a biregular map $h:\mathbb F^n \to \mathbb F^n$ such that $f=h^{-1} g h$.
Then
\begin{equation}\label{hyr}
hf^i= g^i h,\quad
hf^i\mathbb F^n= g^i
h\,\mathbb F^n=g^i
\mathbb F^n,\quad
h\,V_i=W_i.
\end{equation}
By \cite[Chapter 1, Corollary 3.7]{har}, the last equality implies
\begin{equation*}\label{gor}
\dim V_i=\dim W_i,\qquad i=1,2,\ldots
\end{equation*}

If $m:=\max(n-p,n-q)$, then $J_{\circ}^m=J_{\circ}^{\prime m}=0$, and so
\[p=\dim V_m=\dim W_m=q.
\]
Thus, $f_*$ and $g_*$ are affine bijections $V_*\to V_*$ on the same space
\begin{equation*}\label{syu}
V_*:=V_m=W_m=\mathbb F^p.
\end{equation*}
By \eqref{hyr}, the restriction of $h$ to $V_*$ gives some biregular map  $h_*:V_* \to V_*$.
Restricting the equality $hf=g h$ to $V_*$, we obtain
$h_* f_*=g_* h_*.$
Therefore, $f_*$ and $g_*$ are biregularly conjugate.
By Theorem~\ref{Blanc}, their matrices $D_{\alpha}$ and $D_{\beta }$ \emph{coincide up to permutation of eigenvalues}.

The nilpotent Jordan matrices
$J_{\circ}$ and $J_{\circ}'$ \emph{coincide up to permutation of blocks}
since by \eqref{suk}
the number of their Jordan blocks is equal to $n-\dim V_1$, the number of their Jordan blocks of size $\ge2$ is equal to $(n-\dim V_2)-(n-\dim V_1)$, the number of their Jordan blocks of size $\ge3$ is equal to $(n-\dim V_3)-(n-\dim V_2)$, and so on.

\end{document}